\theoremstyle{plain}
\newtheorem{theorem}{Theorem}[section]
\newtheorem{lemma}[theorem]{Lemma}
\theoremstyle{remark}
\newtheorem{definition}[theorem]{Definition}
\newtheorem{remark}[theorem]{Remark}
\DeclareMathOperator{\id}{id}
\DeclareMathOperator{\Sign}{Sign}
\DeclareMathOperator{\supp}{supp}
\DeclareMathOperator*{\argmin}{argmin}
\newcommand{\seq}[1]{(#1)}
\newcommand{\norm}[2][]{\|{#2}\|_{#1}}
\newcommand{\scp}[3][]{\langle{#2},\, {#3}\rangle_{#1}}
\newcommand{\NN}{\mathbf{N}}
\newcommand{\bigO}{\mathcal{O}}
\title{A projection proximal-point algorithm for
  $\ell^1$-minimization}
\author{Dirk A. Lorenz\thanks{Institute for Analysis and Algebra, TU
    Braunschweig, D-38092 Braunschweig, Germany,
    \texttt{d.lorenz@tu-braunschweig.de}}.}
\begin{document}
\maketitle
\begin{abstract}
  The problem of the minimization of least squares functionals with
  $\ell^1$ penalties is considered in an infinite dimensional Hilbert
  space setting. While there are several algorithms available in the
  finite dimensional setting there are only a few of them which come
  with a proper convergence analysis in the infinite dimensional
  setting.
  
  In this work we provide an algorithm from a class which have not
  been considered for $\ell^1$ minimization before, namely a
  proximal-point method in combination with a projection step.  We
  show that this idea gives a simple and easy to implement algorithm.
  We present experiments which indicate that the algorithm may perform
  better than other algorithms if we employ them without any special
  tricks. Hence, we may conclude that the projection proximal-point
  idea is a promising idea in the context of $\ell^1$-minimization.
\end{abstract}

\section{Introduction}
\label{sec:introduction}
In this work, we consider the $\ell^1$-minimization optimization problem. Let $K:
\ell^2 \to H$ be a bounded linear operator mapping the sequence space
$\ell^2$ into a Hilbert space $H$, $g \in H$ and $\alpha>0$. The
minimization problem reads as
\begin{equation}
  \label{eq:orig_min_prob}
  \min_{u\in\ell^2}\Psi(u)\quad\text{with}\quad \Psi(u)= \tfrac12\norm[H]{Ku - g}^2 + \alpha\norm[1]{u}.
\end{equation}
We follow~\cite{solodov1999hybridproximalpoint} and
derive a projection proximal-point algorithm which sequentially solves
a regularized problem
\begin{equation}
  \label{eq:reg_min_prob}
  \min_{u\in\ell^2} \Psi(u) +\tfrac{\mu_n}{2}\norm[2]{u - u^n}^2
\end{equation}
up to desired accuracy and then applies a projection which reduces the
distance to the minimizer of the original
problem~(\ref{eq:orig_min_prob}).  While the regularized
problem~(\ref{eq:reg_min_prob}) is still non-smooth, it turns out that
it can be solved easier than the original
one~(\ref{eq:orig_min_prob}).

The main aim of this article is, to provide another alternative
approach to $\ell^1$-minimization in the infinite dimensional setting.
Other approaches use surrogate
functionals~\cite{daubechies2003iteratethresh}, proximal
forward-backward splitting~\cite{combettes2005signalrecovery} or
generalized gradient
methods~\cite{bredies2005gencondgrad,bredies2008itersoftconvlinear}.
While all the mentioned approaches lead to the same iterated
soft-thresholding procedure, other methods use iterated
hard-thresholding~\cite{bredies2008harditer} or an active set approach
formulated as a semismooth Newton's
method~\cite{griesse2008ssnsparsity}.  We remark that in finite
dimensions several other algorithms are available as, e.g., gradient
type methods like GPSR~\cite{figueiredo2007gradproj} or fixed point
continuation~\cite{hale2008fixedpointcontinuation}, interior point
methods~\cite{kim2007l1ls} or active set methods like LARS and the
homotopy
method~\cite{efron2004lars,osborne2000variableselection,donoho2008fastl1}
to name just a few.  The contribution of the paper is hence twofold:
one the one hand, we add another class of algorithms, namely a
proximal-point-like algorithm, to the zoo of available methods and on
the other hand we provide an algorithm which is globally and strongly
convergent in the infinite dimensional setting. We stress, that the
purpose of this paper is not to develop an algorithm which outperforms
other existing methods.

The article is organized as follows: In
Section~\ref{sec:proj-prox-point} we review the projection
proximal-point algorithm for the solution of maximally monotone
operator equations. In Section~\ref{sec:iter-thresh-algor} we show how
the subproblems in the $\ell^1$ case can be solved by iterative
thresholding or the generalized conditional gradient method. In
Section~\ref{sec:full-algor-conv} we state the full algorithm and show
linear convergence of the method. Section~\ref{sec:numer-exper}
presents numerical experiments and Section~\ref{sec:conclusion}
concludes the article.

\section{The projection proximal-point algorithm}
\label{sec:proj-prox-point}
In this section we review briefly the projection proximal-point
algorithm from~\cite{solodov1999hybridproximalpoint}.  Further we show
how it can be applied in the context of $\ell^1$-minimization.

\subsection{The projection proximal-point algorithm for general maximal monotone inclusions}
\label{sec:proj-prox-point-1}
The projection proximal-point algorithm has been proposed to solve the
following inclusion problem: Let $H$ be Hilbert space and $T$ a
maximal monotone operator on $H$. Find $u\in H$ such that
\begin{equation}
  \label{eq:inclusion_T}
  0\in T(u).
\end{equation}
The algorithm iteratively solves a regularized subproblem: For a
given iterate $u^n$ and a parameter $\mu_n>0$ find $u^{n+1}$ as an
approximate solution of
\begin{equation}
  \label{eq:inclusion_T_regularized}
  0\in T(u) + \mu_n (u-u^n)
\end{equation}
The notion of ``approximate solution'' is made precise by saying that
$u^{n+1}$ is an approximate solution
of~(\ref{eq:inclusion_T_regularized}) if for small $\epsilon^n$ it
holds
\begin{equation}
  \label{eq:inclusion_T_regularized_approximate}
  0 = v^{n+1} + \mu_n (u^{n+1}-u^n) + \epsilon^n,\quad v^{n+1}\in T(u^{n+1}).
\end{equation}
After the solution of this approximate problem, a projection step
follows which provably reduces the distance to the solution set
of~(\ref{eq:inclusion_T}).  In general the regularized
subproblem~(\ref{eq:inclusion_T_regularized}) may be as hard as the
original problem~(\ref{eq:inclusion_T}) but it turns out that often it
is comparably easy to solve.  The total algorithm is stated as
Algorithm~\ref{alg:ppp_general}.
\begin{algorithm}
  \caption{Projection proximal-point algorithm}
  \label{alg:ppp_general}
  \begin{algorithmic}[1]
    \REQUIRE $u^0\in H$, $\sigma\in [0,1[$, $n_\text{max}$
    \STATE Set $n\leftarrow0$ and done $\leftarrow$ false.
    \WHILE {$n < n_\text{max}$ and not done}
    \STATE Choose $\mu_n>0$.
    \STATE
    Calculate $v^n$, $y^n$ and $\epsilon^n$ such that
    \begin{align*}
      0 & = v^n + \mu_n(y^n-u^n) + \epsilon^n\\
      v^n & \in T(y^n)\\
      \norm{\epsilon^n} & \leq \sigma\max\{\norm{v^n},\mu_n\norm{y^n-u^n}\}.
    \end{align*}
    \IF{$v^n=0$ or $y^n = u^n$}
    \STATE done $\leftarrow$ true
    \ELSE
    \STATE Calculate $u^n$ as
    \[
    u^{n+1} \leftarrow u^n - \frac{\scp{v^n}{u^n-y^n}}{\norm{v^n}^2}v^n.
    \]
    \STATE Set $n \leftarrow n+1$.
    \ENDIF
    \ENDWHILE
  \end{algorithmic}
\end{algorithm}

Step~4 of the algorithm solves the regularized subproblem
approximately up to a desired accuracy. The accuracy is tuned by the
parameter $\sigma$. The parameters $\mu_n$ give the amount of
regularization of the subproblem to be solved in step 4. Note that
step~8 is actually the projection onto the hyper-plane $S_n = \{u\in
H\ :\ \scp{v^n}{u-y^n}=0\}$. Note that by the condition on
$\norm{\epsilon^n}$ in step~4 one concludes that $S_n$ separates the
iterate $u^n$ from the solution set of~(\ref{eq:inclusion_T})
(see~\cite[Theorem 2.2]{solodov1999hybridproximalpoint}).

\subsection{The projection proximal-point algorithm for $\ell^1$-minimization}
\label{sec:proj-prox-point-2}
We apply Algorithm~\ref{alg:ppp_general} to the minimization
problem~(\ref{eq:orig_min_prob}).  The maximal monotone operator $T$
is given as the subgradient of the objective functional $\Psi$. With
the multivalued sign-function
\[
\Sign(u)_k \in 
\begin{cases}
  \{1\}, & u_k>0\\
  [-1,1], & u_k = 0\\
  \{-1\}, & u_k<0
\end{cases}.
\]
this problem reads as
\begin{equation*}
  0 \in K^*(Ku - g) + \alpha\Sign(u).
\end{equation*}
It is an easy observation that the regularized subproblem correspoding
to~(\ref{eq:inclusion_T_regularized}) is
\begin{equation}
  \label{eq:inclusion_ell1_regularized}
  0 \in K^*(Ku - g) + \alpha\Sign(u) + \mu_n(u - u^n).
\end{equation}
and is equivalent to the problem
\begin{equation*}
  \min_{u\in\ell^2} \tfrac{1}{2}\norm[H]{Ku-g}^2 + \alpha\norm[1]{u} + \tfrac{\mu_n}{2}\norm[2]{u - u^n}^2.
\end{equation*}
Hence, the subproblems are regularized by adding the $\ell^2$-distance
to the previous iterate. 

The crucial step in Algorithm~\ref{alg:ppp_general} is step~4 where
one needs to solve the regularized problem up to a desired accuracy.
In Section~\ref{sec:iter-thresh-algor} we describe two algorithms with
are shown to produce solutions with the desired accuracy iteratively.
Hence, our algorithms consists of two nested loops. At first glance
one may think that this will result in bad performance but it turns
out that the inner loop usually terminates quite fast (depending on
the choice of $\mu_n$ and $\sigma$). Before turning to the full
algorithm in the case of $\ell^1$-minimization we present two
algorithms to solve the subproblem in step 4 of
Algorithm~\ref{alg:ppp_general}.

\section{Iterative thresholding algorithms for the regularized subproblem}
\label{sec:iter-thresh-algor}
Every iteration of the projection proximal-point algorithm involves the approximate solution of a regularized problem of the form
\begin{equation}
  \label{eq:reg_min_prob_var}
  \min_u \tfrac12\norm{Ku-g}^2 + \alpha\norm[1]{u} + \tfrac{\mu_n}{2}\norm[2]{u-u^n}^2
\end{equation}
In the next subsection we show, that these subproblems can be solved by means of either the a generalized gradient projection method from \cite{bredies2008itersoftconvlinear} (which is in fact a damped iterative soft-thresholding) or a generalized conditional gradient method from~\cite{bredies2005gencondgrad,bredies2008harditer}.

\subsection{Damped iterative soft-thresholding}
\label{sec:damp-iter-soft}
In this subsection we assume that $\norm{K}\leq 1$ (a condition which
may alwas be fulfilled by rescaling the problem). To derive an
algorithm for the approximate solution of the regularized
problem~(\ref{eq:reg_min_prob_var}) we use the
characterization~(\ref{eq:inclusion_ell1_regularized}) for a solution
$\bar u$:
\begin{equation*}
  0 \in K^*(K\bar u - g) + \alpha \Sign(\bar u) + \mu_n(\bar u - u^n).
\end{equation*}
We rewrite the characterization as
\[
\bar u - K^*(K\bar u - g) + \mu_n u^n \in ((1+\mu_n)\id + \alpha\Sign)(\bar u)
\]
which leads to
\[
\tfrac{1}{1+\mu_n}(\bar u - K^*(K\bar u - g) + \mu_n u^n) \in (\id + \tfrac{\alpha}{1+\mu_n}\Sign)(\bar u).
\]
Since $\Sign$ is a maximal monotone operator (as the subgradient of a
proper, convex, and lower-semicontinuous functional), $(\id + c\Sign)$
possesses a single valued inverse for any $c>0$.  This can be given
explicitly as
\[
S_c(u) := (\id + c\Sign)^{-1}(u)_k = 
\begin{cases}
  u_k - c, & u_k > c\\
  0, & |u_k| \leq c\\
  u_k + c, & u_k < -c
\end{cases}
\]
Hence, $\bar u$ fulfills the fixed-point equation
\begin{eqnarray*}
  \bar u & = S_{\tfrac{\alpha}{1+\mu_n}}(\tfrac{1}{1+\mu_n}(\bar u - K^*(K\bar u - g) + \mu_n u^n))\\
  & = \tfrac{1}{1+\mu_n}S_\alpha(\bar u - K^*(K\bar u - g) + \mu_n u^n).
\end{eqnarray*}
Since the operator $S_\alpha$ is non-expansive and we assumed that
$\norm{K}\leq 1$, we see that the mapping $u\mapsto
(1+\mu_n)^{-1}S_\alpha(u - K^*(Ku - g) + \mu_n u^n)$ is a contraction
with constant $(1+\mu_n)^{-1}$.  By the Banach fixed point theorem we
have the following result.
\begin{theorem}
  \label{thm:conv_damp_iter_soft}
  The iterates
  \begin{equation}
    \label{eq:soft_iter_subprob}
    y^{k+1} = \tfrac{1}{1+\mu_n}S_\alpha(y^n - K^*(Ky^k - g) + \mu_n u^n)
  \end{equation}
  converge with linear rate to the solution of~(\ref{eq:reg_min_prob_var}), especially it holds for the solution $\bar y$ of~(\ref{eq:reg_min_prob_var}) that
  \[
  \norm{y^{k+1} - \bar y} \leq \tfrac{1}{1+\mu_n}\norm{y^k-\bar y}.
  \]
\end{theorem}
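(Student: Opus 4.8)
The plan is to read the iteration~(\ref{eq:soft_iter_subprob}) as a Picard iteration $y^{k+1} = F(y^k)$ for the map
\[
F(u) = \tfrac{1}{1+\mu_n}\, S_\alpha\bigl(u - K^*(Ku - g) + \mu_n u^n\bigr),
\]
which is a self-map of $\ell^2$, and then to invoke the Banach fixed point theorem. All the substantive content sits in verifying that $F$ is a contraction with constant $q := (1+\mu_n)^{-1}$; once that is in hand, existence and uniqueness of a fixed point $\bar y$, convergence $y^k\to\bar y$, and the claimed linear rate all drop out at once.

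To estimate the Lipschitz constant I would take $u,v\in\ell^2$ and split off the constant part of the argument. Writing $\Phi(u) = u - K^*(Ku - g) + \mu_n u^n$, the additive term $K^*g + \mu_n u^n$ cancels in the difference, so $\Phi(u) - \Phi(v) = (\id - K^*K)(u-v)$. Since $S_\alpha = (\id + \alpha\Sign)^{-1}$ is the resolvent of the maximal monotone operator $\alpha\Sign$, it is (firmly, hence) non-expansive, giving
\[
\norm{F(u) - F(v)} \leq \tfrac{1}{1+\mu_n}\,\norm{(\id - K^*K)(u-v)} \leq \tfrac{1}{1+\mu_n}\,\norm{\id - K^*K}\,\norm{u-v}.
\]

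The one place where the standing hypothesis $\norm{K}\le 1$ is genuinely used — and what I would flag as the only real step — is the bound $\norm{\id - K^*K}\le 1$. I would argue this spectrally: $K^*K$ is self-adjoint and positive semidefinite with $\norm{K^*K} = \norm{K}^2 \le 1$, so its spectrum lies in $[0,1]$; hence the spectrum of $\id - K^*K$ also lies in $[0,1]$ and $\norm{\id - K^*K}\le 1$. This yields $\norm{F(u)-F(v)}\le q\,\norm{u-v}$ with $q<1$, so $F$ is a contraction.

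Finally, the Banach fixed point theorem gives a unique fixed point $\bar y$ with $y^k\to\bar y$. I would then identify $\bar y$ with the minimizer of~(\ref{eq:reg_min_prob_var}): the fixed-point equation $\bar y = F(\bar y)$ is, after undoing the resolvent, exactly the inclusion~(\ref{eq:inclusion_ell1_regularized}), which is the optimality condition for the strictly convex problem~(\ref{eq:reg_min_prob_var}), whose minimizer is therefore unique and equal to $\bar y$. The stated estimate is then immediate, since $\bar y = F(\bar y)$ gives
\[
\norm{y^{k+1} - \bar y} = \norm{F(y^k) - F(\bar y)} \le q\,\norm{y^k - \bar y} = \tfrac{1}{1+\mu_n}\norm{y^k - \bar y}.
\]
I expect no serious obstacle; apart from the norm bound on $\id - K^*K$, the argument is bookkeeping built on the non-expansiveness of $S_\alpha$.
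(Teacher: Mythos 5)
Your proof is correct and follows essentially the same route as the paper: the paper likewise observes that the map $u\mapsto (1+\mu_n)^{-1}S_\alpha(u - K^*(Ku-g) + \mu_n u^n)$ is a contraction with constant $(1+\mu_n)^{-1}$, using non-expansiveness of $S_\alpha$ together with $\norm{K}\le 1$, and then invokes the Banach fixed point theorem. You merely spell out the details (the cancellation leaving $\id - K^*K$, the spectral bound $\norm{\id-K^*K}\le 1$, and the identification of the fixed point with the minimizer) that the paper leaves implicit.
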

\begin{remark}[Damped iterated soft-thresholding as generalized gradient projection method]
  \label{rem:iter_soft_thresh_as_ggpm}
  An alternative motivation for the above algorithm is as follows.
  We split the objective function as
  \[
  F(u) = \tfrac12\norm{Ku - g}^2,\qquad \Phi(u) = \alpha\norm[1]{u} + \tfrac{\mu_n}{2}\norm{u-u^n}^2.
  \]
  Now we apply the generalized gradient projection method from~\cite{bredies2008itersoftconvlinear} to the problem
  \[
  \min_u F(u) + \Phi(u).
  \]
  The algorithm is
  \[
  y^{k+1} = J_{s_k}(y^k - s_k F'(y^k))
  \]
  where $F'(y^k) = K^*(Ky^k - g)$ and $J_s$ is the proximal mapping
  \[
  J_s(y) = \argmin_v \tfrac12\norm{v-y}^2 + s\Phi(v).
  \]
  One easily verifies that
  \[
  J_s(y) = \tfrac{1}{1+s\mu_n}S_{s\alpha}(y + s\mu_n u^n)
  \]
  and hence the generalized gradient projection method gives
  \[
  y^{k+1} = \tfrac{1}{1+s_k\mu_n}S_{s_k\alpha}(y^k -s_k K^*(Ky^k - g)  + \mu_n u^n)
  \]
  which is the same as~(\ref{eq:soft_iter_subprob}) for $s_k=1$.
  In~\cite{bredies2008itersoftconvlinear} it is shown that the
  generalized gradient projection method converges as soon as the
  stepsizes fulfill $0<\underbar{s} \leq s_k\leq \bar s < 2/\norm{K}^2$.
\end{remark}

Finally we state the damped iterative soft-thresholding as
Algorithm~\ref{alg:dampitersoft}.
\begin{algorithm}
  \caption{Damped iterative soft-thresholding}
  \label{alg:dampitersoft}
  \begin{algorithmic}[1]
    \REQUIRE $y^0\in H$,  $k_\text{max}$, $\underbar{s}$, $\bar s$
    \STATE Set $k\leftarrow0$
    \WHILE {$k < k_\text{max}$}
    \STATE Choose $0<\underbar{s} \leq s_k\leq \bar s < 2/\norm{K}^2$ and update
    \begin{equation*}
      y^{k+1} = \tfrac{1}{1+s_k\mu_n}S_{s_k\alpha}(y^k -s_k K^*(Ky^k - g)  + \mu_n u^n)
    \end{equation*}
    \STATE Set $n \leftarrow n+1$
    \ENDWHILE
  \end{algorithmic}
\end{algorithm}

\subsection{Generalized conditional gradient method}
\label{sec:gener-cond-grad}
The generalized conditional gradient method as proposed and analyzed in
\cite{bredies2008harditer,bredies2005gencondgrad,bonesky2007gencondgradnonlin}
offers another
possibility for the approximate solution
of~(\ref{eq:reg_min_prob_var}).  As in
Remark~\ref{rem:iter_soft_thresh_as_ggpm} we split the objective
function as
\[
F(u) = \tfrac12\norm{Ku - g}^2,\qquad \Phi(u) = \alpha\norm[1]{u} + \tfrac{\mu_n}{2}\norm{u-u^n}^2.
\]
Now we apply the generalized conditional gradient method to $\min F(u)
+ \Phi(u)$.  In the case $F(u) = \tfrac12\norm{Ku-g}^2$ and a general
convex $\Phi$ the algorithm is given in
Algorithm~\ref{alg:gencondgrad}.
\begin{algorithm}
  \caption{Generalized conditional gradient method}
  \label{alg:gencondgrad}
  \begin{algorithmic}[1]
    \REQUIRE $y^0\in H$,  $k_\text{max}$
    \STATE Set $k\leftarrow0$
    \WHILE {$k < k_\text{max}$}
    \STATE
    Calculate a search direction $w^k$ as
    \begin{equation}
      \label{eq:search_direction_gcgm}
      w^k\in\argmin_w \scp{K^*(Ky^k - g)}{w} + \Phi(w).
    \end{equation}
    \STATE 
    Calculate a step-size $s_k$ according to 
    \[
    s_k = \min\left\{1, \frac{\Phi(y^k) - \Phi(w^k) + \scp{Ky^k - g}{K(y^k-w^k)}}{\norm{K(y^k-w^k)}^2}\right\}.
    \]
    \STATE
    Update $y^{k+1} = y^k + s_k(w^k-y^k)$ and set $n \leftarrow n+1$
    \ENDWHILE
  \end{algorithmic}
\end{algorithm}

For our special choice of $\Phi$ the search direction $w^k$ in~(\ref{eq:search_direction_gcgm}) is given by the following lemma.
\begin{lemma}
  Let $\Phi(u)= \alpha\norm[1]{u} + \tfrac{\mu_n}{2}\norm{u-u^n}^2$. The solution of~(\ref{eq:search_direction_gcgm}) is given by
  \[
  w = \tfrac{1}{\mu_n}S_\alpha(\mu_n u^n - K^*(Ky^k-g)).
  \]
\end{lemma}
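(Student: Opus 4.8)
The plan is to exploit convexity and separability to reduce the minimization to a single inclusion (the first-order optimality condition), and then to recognize the resulting resolvent as a scaled soft-thresholding operator, using the same resolvent machinery that was already set up for the damped iterative soft-thresholding step.

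First I would abbreviate $z = K^*(Ky^k - g)$, so that the objective in~(\ref{eq:search_direction_gcgm}) becomes $\scp{z}{w} + \alpha\norm[1]{w} + \tfrac{\mu_n}{2}\norm{w-u^n}^2$. Because of the quadratic term with $\mu_n>0$, this functional is strictly convex and coercive, hence it has a unique minimizer $w$. Moreover, since everything is separable in the coordinates, one may work componentwise, and the minimizer is characterized by the subgradient condition $0 \in z + \alpha\Sign(w) + \mu_n(w - u^n)$; by convexity this condition is both necessary and sufficient, so it determines $w$ completely.

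Next I would rearrange this inclusion to isolate a resolvent of $\Sign$. Moving the linear and proximal terms to the left yields $\mu_n u^n - z \in \mu_n w + \alpha\Sign(w) = \mu_n(\id + \tfrac{\alpha}{\mu_n}\Sign)(w)$, and dividing by $\mu_n$ gives $\tfrac{1}{\mu_n}(\mu_n u^n - z) \in (\id + \tfrac{\alpha}{\mu_n}\Sign)(w)$. Invoking that $S_c = (\id + c\Sign)^{-1}$ is single-valued for every $c>0$ — which was already established above in the derivation of the fixed-point equation for $\bar u$ — this inverts to $w = S_{\alpha/\mu_n}\bigl(\tfrac{1}{\mu_n}(\mu_n u^n - z)\bigr)$.

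Finally I would apply the elementary scaling identity $S_{c/\lambda}(x/\lambda) = \tfrac{1}{\lambda}S_c(x)$ valid for any $\lambda>0$, which is checked directly from the explicit componentwise formula for $S_c$ (exactly the same manipulation that turned $S_{\alpha/(1+\mu_n)}$ into $\tfrac{1}{1+\mu_n}S_\alpha$ in the previous subsection). Taking $\lambda = \mu_n$, $c = \alpha$ and $x = \mu_n u^n - z$ then gives $w = \tfrac{1}{\mu_n}S_\alpha(\mu_n u^n - z) = \tfrac{1}{\mu_n}S_\alpha(\mu_n u^n - K^*(Ky^k - g))$, which is the asserted formula. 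I do not anticipate a real obstacle: the substantive points are merely that the subgradient condition is \emph{sufficient} (guaranteed by strict convexity) and that the resolvent is applied with the correct threshold and scaling, both of which are handled cleanly by the resolvent identity and the scaling identity.
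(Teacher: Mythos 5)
Your proposal is correct and follows essentially the same route as the paper's own proof: characterize the minimizer by the subgradient inclusion $0 \in K^*(Ky^k-g) + \alpha\Sign(w) + \mu_n(w-u^n)$, rewrite it as a resolvent inclusion for $\id + \tfrac{\alpha}{\mu_n}\Sign$, and invert using the soft-thresholding operator together with the scaling identity $S_{c/\lambda}(x/\lambda) = \tfrac{1}{\lambda}S_c(x)$. You merely spell out explicitly (strict convexity, sufficiency of the optimality condition, the scaling step) what the paper compresses into the phrase ``similar to the calculation in Section~\ref{sec:damp-iter-soft}.''
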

\begin{proof}
  A solution $w$ of~(\ref{eq:search_direction_gcgm}) is characterized by
  \[
  0 \in K^*(Ky^k - g) + \alpha\Sign(w) + \mu_n(w-u^n)
  \]
  which we rewrite as
  \[
  u^n - \tfrac{1}{\mu_n}K^*(Ky^k - g) \in (\id + \tfrac{\alpha}{\mu_n}\Sign)(w).
  \]
  Similar to the calculation in Section~\ref{sec:damp-iter-soft} this leads to
  \[
  w = \tfrac{1}{\mu_n} S_\alpha(\mu_n u^n - K^*(Ky^k - g)).
  \]
\end{proof}

Using techniques from~\cite{bredies2008harditer} we derive the following result:
\begin{theorem}
  Let $\Phi(u)= \alpha\norm[1]{u} + \tfrac{\mu_n}{2}\norm{u-u^n}^2$.
  Then the iterates $y^k$ produced by Algorithm~\ref{alg:gencondgrad}
  converge with a linear rate to the unique solution
  of~(\ref{eq:reg_min_prob_var}).
\end{theorem}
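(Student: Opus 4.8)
The plan is to exploit the $\mu_n$-strong convexity of $\Phi$, which is inherited by the subproblem objective $J:=F+\Phi$ and which upgrades the generic sublinear estimate for the conditional gradient method to a linear one. First, since $\Phi(u)=\alpha\norm[1]{u}+\tfrac{\mu_n}{2}\norm{u-u^n}^2$ is $\mu_n$-strongly convex and $F$ is convex, $J$ is $\mu_n$-strongly convex, so~(\ref{eq:reg_min_prob_var}) has a unique solution $\bar y$. I would introduce the gap $G_k:=\scp{F'(y^k)}{y^k-w^k}+\Phi(y^k)-\Phi(w^k)$ with $F'(y^k)=K^*(Ky^k-g)$, which is exactly the numerator of the step size $s_k$ in Algorithm~\ref{alg:gencondgrad} and is nonnegative because $w^k$ minimizes the linearized objective. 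Combining the defining inequality of $w^k$ with the convexity of $F$ shows that the gap dominates the suboptimality, $J(y^k)-J(\bar y)\leq G_k$.

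Second, I would establish a descent lemma. As $F$ is quadratic, its expansion along the segment $y^k+s(w^k-y^k)$ is exact, while convexity of $\Phi$ gives $\Phi(y^k+s(w^k-y^k))\leq(1-s)\Phi(y^k)+s\Phi(w^k)$; together these furnish the upper model $J(y^k+s(w^k-y^k))\leq J(y^k)-sG_k+\tfrac{s^2}{2}\norm{K(w^k-y^k)}^2$. The step size in the algorithm is precisely the minimizer of this model over $[0,1]$, so evaluating at $s_k$ and distinguishing whether the unconstrained minimizer lies below or above $1$ yields $J(y^k)-J(y^{k+1})\geq\tfrac12\min\{G_k,\,G_k^2/\norm{K(w^k-y^k)}^2\}$, where the case $\norm{K(w^k-y^k)}=0$ is subsumed by reading the ratio as $+\infty$.

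Third — and this is the \emph{crux} — I would convert strong convexity into a lower bound on the gap. The optimality of $w^k$ reads $-F'(y^k)\in\partial\Phi(w^k)$, so the strong convexity inequality for $\Phi$ evaluated between $w^k$ and $y^k$ with this subgradient gives $G_k\geq\tfrac{\mu_n}{2}\norm{w^k-y^k}^2$. Inserting $\norm{K(w^k-y^k)}^2\leq\norm{K}^2\norm{w^k-y^k}^2\leq(2\norm{K}^2/\mu_n)\,G_k$ into the descent bound cancels one power of $G_k$ and produces a decrease that is \emph{linear} in the gap, $J(y^k)-J(y^{k+1})\geq\theta\,G_k$ with $\theta=\min\{\tfrac12,\,\mu_n/(4\norm{K}^2)\}\in(0,1)$. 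I expect this step to be the main obstacle: without the strong convexity one only obtains the $\bigO(1/k)$ rate typical of conditional gradient schemes, and the whole point is that the quadratic regularizer supplies exactly the curvature needed to pass to a geometric rate.

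Finally, chaining the gap-dominates-suboptimality estimate with this linear descent gives $J(y^{k+1})-J(\bar y)\leq(1-\theta)(J(y^k)-J(\bar y))$, i.e. linear decay of the objective gap. Strong convexity then transfers this to the iterates through $\tfrac{\mu_n}{2}\norm{y^k-\bar y}^2\leq J(y^k)-J(\bar y)$, establishing linear convergence of $y^k$ to the unique minimizer of~(\ref{eq:reg_min_prob_var}).
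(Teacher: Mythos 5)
Your proof is correct, but it takes a genuinely different route from the paper. The paper does not carry out the descent analysis itself: it invokes Theorem~7 of the reference on iterated hard thresholding, which guarantees linear convergence of the generalized conditional gradient method once the local coercivity estimate $\scp{K^*(Ky^*-g)}{v-y^*}+\Phi(v)-\Phi(y^*)\geq c\norm{v-y^*}^2$ holds near the minimizer $y^*$; it then verifies this with $c=\mu_n/2$ by writing the left-hand side as a Bregman distance of $\Phi$ and expanding it componentwise, the quadratic term $\tfrac{\mu_n}{2}\norm{\cdot-u^n}^2$ supplying exactly the required curvature. You instead give a self-contained Frank--Wolfe analysis: the same structural fact ($\mu_n$-strong convexity of $\Phi$) is deployed not at $y^*$ but at the search direction $w^k$, via $-F'(y^k)\in\partial\Phi(w^k)$, to get the gap lower bound $G_k\geq\tfrac{\mu_n}{2}\norm{w^k-y^k}^2$, which converts the usual quadratic-in-$G_k$ decrease into a decrease linear in $G_k$ and hence a geometric rate for $J(y^k)-J(\bar y)$, transferred to the iterates by strong convexity. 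Both arguments are sound and rest on the same curvature source; the paper's version is shorter because it delegates the descent machinery to the cited theorem, while yours is more transparent and self-contained, makes the contraction factor $1-\min\{\tfrac12,\mu_n/(4\norm{K}^2)\}$ explicit, and exposes each ingredient (gap domination, exact quadratic line search, strong-convexity lower bound) that the black-box theorem hides. One cosmetic remark: your rate for the iterates is R-linear (square root of the objective rate), which is what ``linear rate'' means here, so no gap arises.
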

\begin{proof}
  We use Theorem~7 from~\cite{bredies2008harditer}.  Let $y^*$ denote
  the unique solution of~(\ref{eq:reg_min_prob_var}).
  In~\cite[Theorem~7]{bredies2008harditer} it is shown, that one gets
  linear convergence of the iterates of the generalized conditional
  gradient method as soon as an estimate
  \begin{equation}
    \label{eq:bregman_estimate}
    \scp{K^*(Ky^* - g)}{v - y^*} + \Phi(v) - \Phi(y^*)
    \geq c\norm{v-y^*}^2
  \end{equation}
  holds locally around $y^*$.
  Since $y^*$ is a solution of~(\ref{eq:reg_min_prob_var}) we conclude that
  \[
  -K^*(Ky^* - g) \in \partial\Phi(y^*).
  \]
  and hence, the right hand side of~(\ref{eq:bregman_estimate}) a
  Bregman distance with respect to $\Phi$. By standard argument from convex analysis we conclude that the subdifferential of $\Phi$ at $y^*$ is $\alpha\Sign(y^*) + \mu_n(y^*-u^n)$ and hence we have
  \[
  -K^*(Ky^*-g) = \alpha w + \mu_n(y^*-u^n)\ \text{ with }\ w\in\Sign(y^*).
  \]
  Now we estimate
  \begin{multline*}
    \scp{K^*(Ky^* - g)}{v - y^*} + \Phi(v) - \Phi(y^*)
    \\
    = \sum_i -(\alpha w_i + \mu_n(y^*_i - u^n_i))(v_i - y^*_i) + \alpha(|v_i| - |y^*_i|) + \tfrac{\mu_n}{2}((v_i-u^n_i)^2 - (y^*_i - u^n_i)^2)\\
    = \sum_i  \alpha\underbrace{\bigl(|v_i| - |y^*_i| - w_i(v_i - y^*_i)\bigr)}_{\geq 0}  + \tfrac{\mu_n}{2}\bigl((v_i-u^n_i)^2 - (y^*_i-u^n_i)^2 - 2(y^*_i - u^n_i)(v_i - y^*_i)\bigr)\\
    \geq \tfrac{\mu_n}{2} \sum_i (v_i-u^n_i)^2 - (y^*_i-u^n_i)^2 - 2(y^*_i - u^n_i)(v_i - y^*_i)\\
    = \tfrac{\mu_n}{2}\sum_i (v_i - y^*_i)^2 =
    \tfrac{\mu_n}{2}\norm{v-y^*}^2.
  \end{multline*}
  Hence, we proved~(\ref{eq:bregman_estimate}) with $c=\mu_n/2$ and
  the claim follows from~\cite[Theorem~7]{bredies2008harditer}
\end{proof}

\section{Full algorithm and convergence properties}
\label{sec:full-algor-conv}

In the previous section we derived two algorithms which solve the
problem~(\ref{eq:inclusion_ell1_regularized}) which is needed in
step~4 of Algorithm~\ref{alg:ppp_general}. For a given $u^n$ both
algorithms produce iterates $y^k$ which converge with a linear rate to
the solution of~(\ref{eq:inclusion_ell1_regularized}). Hence, it
remains to check when to stop the iteration to fulfill the condition
in step~4 of Algorithm~\ref{alg:ppp_general}, namely:
\begin{align}
  v^n & \in T(y^k) \label{eq:cond_v_in_T_y} \\
  0 & = v^n + \mu_n(y^k-u^n) + \epsilon^n \label{eq:cond_def_epsilon}\\
  \norm{\epsilon^n} & \leq \sigma\max\{\norm{v^n},\mu_n\norm{y^k-u^n}\}
  \label{eq:cond_epsilon_less}.
\end{align}
To do so, we proceed as follows: Given an iterate of the outer
iteration $u^n$ and an iterate of the corresponding inner iteration
$y^k$ we define the projection onto the set $\alpha\Sign(y^k)$ as
\[
P_{\alpha\Sign(y^k)}(u)_i = 
\begin{cases}
  \alpha & y^k_i>0 \text{ or } (y^k_i=0 \text{ and } u_i >\alpha)\\
  -\alpha & y^k_i<0 \text{ or } (y^k_i=0 \text{ and } u_i < - \alpha)\\
  u_i & y^k_i= 0 \text{ and }  |u_i|\leq \alpha.
\end{cases}
\]
Then we calculate
\[
\epsilon^k = (\id - P_{\alpha\Sign(y^k)})(-K^*(Ky^k-g) - \mu_n(y^k-u^n))
\]
and
\[
v^k = -\mu_n(y^k - u^n) - \epsilon^k.
\]
It is obvious that then~(\ref{eq:cond_def_epsilon}) is fulfilled.
Moreover one sees
\begin{align*}
  v^k & =  -\mu_n(y^k - u^n) - (\id - P_{\alpha\Sign(y^k)})(-K^*(Ky^k-g) - \mu_n(y^k-u^n))\\
  & = K^*(Ky^k - g) + P_{\alpha\Sign(y^k)}(-K^*(Ky^k-g) - \mu_n(y^k-u^n))\\
  & \in K^*(K y^k - g) + \alpha \Sign(y^k) = T(y^k).
\end{align*}
Finally, it remains to check the inequality~(\ref{eq:cond_epsilon_less})
to accept an iterate $y^k$.

Putting the pieces together, we get the projection proximal-point algorithm for $\ell^1$-minimization as Algorithm~\ref{alg:ppp_l1}.
\begin{algorithm}
  \caption{Projection proximal-point algorithm for $\ell^1$-minimization}
  \label{alg:ppp_l1}
  \begin{algorithmic}[1]
    \REQUIRE $u^0\in H$, $\sigma\in [0,1[$, $n_\text{max}$
    \STATE Set $n\leftarrow0$ and done$_\text{outer}$ $\leftarrow$ false.
    \WHILE {$n < n_\text{max}$ and not done$_\text{outer}$}
    \STATE Choose $\mu_n>0$, and $y^0$.
    \STATE Set $k\leftarrow 1$ and done$_\text{inner}$ $\leftarrow$ false.
    \WHILE {not done$_\text{inner}$}
    \STATE Calculate $y^{k}$ via one step of either Algorithm~\ref{alg:dampitersoft} or~~\ref{alg:gencondgrad}.
    \STATE Calculate
    \begin{align*}
      \epsilon^k &= (\id - P_{\alpha\Sign(y^k)})(-K^*(Ky^k-g) - \mu_n(y^k-u^n))\\
      v^k & = -\mu_n(y^k - u^n) - \epsilon^k.
    \end{align*}
    \IF{$\norm{\epsilon^k} \leq \sigma\max\{\norm{v^k},\mu_n\norm{y^k-u^n}\}$}
    \STATE done$_\text{inner}$ $\leftarrow$ true
    \ELSE
    \STATE Set $k \leftarrow k+1$.
    \ENDIF
    
    \ENDWHILE
    \IF{$v^n=0$ or $y^n = u^n$}
    \STATE done$_\text{outer}$ $\leftarrow$ true
    \ELSE
    \STATE Calculate $u^n$ as
    \[
    u^{n+1} \leftarrow u^n - \frac{\scp{v^n}{u^n-y^n}}{\norm{v^n}^2}v^n.
    \]
    \STATE Set $n \leftarrow n+1$.
    \ENDIF
    \ENDWHILE
  \end{algorithmic}
\end{algorithm}


The projection proximal-point algorithm is known to converge
$Q$-linearly under certain requirements. We cite
from~\cite{solodov1999hybridproximalpoint}:
\begin{theorem}
  \label{thm:q-conv_ppp_general}
  Let $T$ be maximally monotone, let $\bar u$ be a solution
  of~(\ref{eq:inclusion_T}) and let $\mu_n$ be bounded from above. If
  moreover there exists a constant $L>0$ such that for all $y,v$ with
  $\norm{v}\leq\delta$ and such that $v\in T(y)$ it holds
  \[
  \norm{y-\bar u} \leq L\norm{v}
  \]
  then the projection proximal-point algorithm~\ref{alg:ppp_general}
  converges $Q$-linearly.
\end{theorem}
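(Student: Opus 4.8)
The plan is to reproduce the convergence analysis of~\cite{solodov1999hybridproximalpoint}, whose two essential ingredients are a Fej\'er-type monotonicity produced by the projection step and a local lower bound on the length of each step furnished by the error-bound hypothesis. Throughout I write $\bar\mu$ for the upper bound on the $\mu_n$ and use the quantities $v^n,y^n,\epsilon^n$ from step~4 of Algorithm~\ref{alg:ppp_general}.

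First I would record the geometry of the projection step. Since $v^n\in T(y^n)$ and $0\in T(\bar u)$, monotonicity of $T$ gives $\scp{v^n}{y^n-\bar u}\ge 0$, so $\bar u$ lies in the halfspace $\{u:\scp{v^n}{u-y^n}\le 0\}$. On the other hand, eliminating $\epsilon^n$ from $0=v^n+\mu_n(y^n-u^n)+\epsilon^n$ (so that $\mu_n(u^n-y^n)=v^n+\epsilon^n$) and invoking $\norm{\epsilon^n}\le\sigma\max\{\norm{v^n},\mu_n\norm{y^n-u^n}\}$ yields $\scp{v^n}{u^n-y^n}>0$ whenever $v^n\ne0$ and $u^n\ne y^n$; hence $u^n$ lies strictly on the opposite side of the separating hyperplane $S_n$. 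As $u^{n+1}$ is exactly the projection of $u^n$ onto $S_n$, equivalently onto the halfspace containing $\bar u$, the standard projection inequality gives the Fej\'er estimate
\[
\norm{u^{n+1}-\bar u}^2\le\norm{u^n-\bar u}^2-\norm{u^{n+1}-u^n}^2,\qquad \norm{u^{n+1}-u^n}=\frac{\scp{v^n}{u^n-y^n}}{\norm{v^n}}.
\]
This already shows that $\norm{u^n-\bar u}$ is nonincreasing and $\norm{u^{n+1}-u^n}\to0$. Combining the latter with the elementary bounds $\norm{v^n}\le C_\sigma\mu_n\norm{u^n-y^n}$ and the positivity estimates for $\scp{v^n}{u^n-y^n}$ gives $\norm{u^{n+1}-u^n}\ge c\,\norm{v^n}$, whence $\norm{v^n}\to0$ and eventually $\norm{v^n}\le\delta$, so that the error-bound hypothesis becomes applicable to $y^n,v^n$. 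I expect this \emph{activation} of the local error bound to require the most care: before it is available one has only qualitative (Fej\'er) convergence, and the two branches of the $\max$ in the stopping test genuinely must be treated separately (in the branch $\norm{\epsilon^n}\le\sigma\norm{v^n}$ one works from $\scp{v^n}{u^n-y^n}\ge\tfrac{1-\sigma}{\mu_n}\norm{v^n}^2$, in the branch $\norm{\epsilon^n}\le\sigma\mu_n\norm{u^n-y^n}$ from $\scp{v^n}{u^n-y^n}\ge(1-\sigma)\mu_n\norm{u^n-y^n}^2$).

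Once $\norm{v^n}\le\delta$, I would chain the estimates to bound the step length below by the distance to the solution. The error bound gives $\norm{y^n-\bar u}\le L\norm{v^n}$, so by the triangle inequality $\norm{u^n-\bar u}\le\norm{u^n-y^n}+L\norm{v^n}$; feeding the per-branch lower bounds for $\scp{v^n}{u^n-y^n}$ together with $\norm{v^n}\le C_\sigma\mu_n\norm{u^n-y^n}$ and $\mu_n\le\bar\mu$ into the formula for $\norm{u^{n+1}-u^n}$ produces, in each branch, a positive constant; taking the smaller one gives $\gamma\in(0,1)$ with $\norm{u^{n+1}-u^n}\ge\gamma\norm{u^n-\bar u}$ (for instance $\gamma=\tfrac{1-\sigma}{(1+\sigma)(1+L(1+\sigma)\bar\mu)}$ suffices for the branch $\norm{\epsilon^n}\le\sigma\mu_n\norm{u^n-y^n}$). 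Substituting into the Fej\'er estimate yields
\[
\norm{u^{n+1}-\bar u}\le\sqrt{1-\gamma^2}\,\norm{u^n-\bar u},
\]
which is the asserted $Q$-linear rate. Finally I note that applying the error bound with $v=0$ forces $\bar u$ to be the only solution in a neighbourhood, so the limit is indeed $\bar u$ and the statement is consistent.
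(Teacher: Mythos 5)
The paper does not prove this theorem at all --- it is explicitly quoted from~\cite{solodov1999hybridproximalpoint} --- so there is no in-paper argument to compare against; your reconstruction follows precisely the strategy of that reference (Fej\'er monotonicity of $\norm{u^n-\bar u}$ from the halfspace projection, the per-branch lower bounds $\scp{v^n}{u^n-y^n}\ge\tfrac{1-\sigma}{\mu_n}\norm{v^n}^2$ and $\ge(1-\sigma)\mu_n\norm{u^n-y^n}^2$ to activate the error bound and then to produce $\norm{u^{n+1}-u^n}\ge\gamma\norm{u^n-\bar u}$), and your constants, including $\gamma=\tfrac{1-\sigma}{(1+\sigma)(1+L(1+\sigma)\bar\mu)}$, check out. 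I see no gaps.
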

In the special case of $\ell^1$-minimization the above theorem is
applicable if the operator $K$ obeys the finite basis injectivity
property from~\cite{bredies2008itersoftconvlinear}:
\begin{definition}
  \label{def:fbi}
  An operator $K:\ell^2\to H$ mapping into a Hilbert space has the
  \emph{finite basis injectivity} (FBI) property, if for all finite subsets
  $I\subset\NN$ the operator $K|_I$ is injective, i.e.~for all $u,v\in
  \ell^2$ with $Ku = Kv$ and $u_k = v_k = 0$ for all $k \notin I$ it
  follows $u = v$.
\end{definition}

\begin{theorem}
  \label{thm:q-conv_ppp_l1}
  Let $K:\ell^2\to H$ obey the FBI property. Then
  Algorithm~\ref{alg:ppp_l1} for $\ell^1$-minimization converges
  globally and $Q$-linearly.
\end{theorem}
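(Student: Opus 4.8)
The plan is to verify the three hypotheses of Theorem~\ref{thm:q-conv_ppp_general} for the operator $T=\partial\Psi$ with $\Psi$ as in~(\ref{eq:orig_min_prob}). Two of them are routine: since $\Psi$ is proper, convex and lower semicontinuous, $T=\partial\Psi$ is maximally monotone; since $\Psi(u)\geq\alpha\norm[1]{u}\geq\alpha\norm[2]{u}$ is coercive and weakly lower semicontinuous, a minimizer $\bar u$ exists; and the boundedness of $\mu_n$ from above holds by the choice of $\mu_n$ in the algorithm. The whole weight of the proof therefore rests on the local error bound, namely the existence of $L,\delta>0$ with $\norm{y-\bar u}\leq L\norm{v}$ whenever $v\in T(y)$ and $\norm{v}\leq\delta$. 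This is exactly where the FBI property enters, and it is the main obstacle.

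First I would extract two structural consequences from the optimality condition $-K^*(K\bar u-g)=\alpha\bar w$ with $\bar w\in\Sign(\bar u)$. Writing $r=K\bar u-g$, the vector $K^*r$ lies in $\ell^2$, so $(K^*r)_k\to 0$; since $|(K^*r)_k|=\alpha$ is forced on $I=\supp(\bar u)$, the support $I$ is finite. The same observation shows that the tight-but-inactive set $J=\{k\notin I:\ |(K^*r)_k|=\alpha\}$ is finite and that on the complement of $\tilde I:=I\cup J$ the multipliers satisfy a uniform gap $1-|\bar w_k|\geq\gamma>0$ (because $\bar w_k\to 0$ off the support, only finitely many multipliers approach one, and those were absorbed into $\tilde I$). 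As $\tilde I$ is finite, Definition~\ref{def:fbi} makes $K|_{\tilde I}$ injective on a finite-dimensional space, hence bounded below: $\norm{Kh_{\tilde I}}\geq\kappa\norm{h_{\tilde I}}$ for some $\kappa>0$. Uniqueness of $\bar u$ drops out here, since two minimizers share the residual $r$ and thus differ by an element of the kernel of $K|_{\tilde I}$.

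For the bound itself, set $h=y-\bar u$ and subtract the two inclusions to get $v=K^*Kh+\alpha(w-\bar w)$ with $w\in\Sign(y)$. Testing against $h$ and using componentwise monotonicity of $\Sign$ gives
\[
  \scp{v}{h}\;\geq\;\norm{Kh}^2+\alpha\sum_{k\notin I}(1-|\bar w_k|)\,|y_k|\;\geq\;\norm{Kh}^2+\alpha\gamma\,\norm[1]{h_{\tilde I^{c}}},
\]
because the terms on $J$ are nonnegative and those on $\tilde I^{c}$ carry the gap $\gamma$. Together with $\scp{v}{h}\leq\norm{v}\norm{h}$ this controls both $\norm{Kh}^2$ and $\norm{h_{\tilde I^{c}}}$. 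I then split $\norm{h}^2=\norm{h_{\tilde I}}^2+\norm{h_{\tilde I^{c}}}^2$, bound the first summand through $\kappa\norm{h_{\tilde I}}\leq\norm{Kh_{\tilde I}}\leq\norm{Kh}+\norm{K}\norm{h_{\tilde I^{c}}}$, and feed in the two estimates. This produces an inequality of the shape $\norm{h}^2\leq c_1\norm{v}\norm{h}+c_2\norm{v}\norm{h}^2$; for $\norm{v}\leq\delta$ with $c_2\delta\leq\tfrac12$ the quadratic term is absorbed and $\norm{h}\leq L\norm{v}$ follows.

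The genuine difficulty is the gap step: absent strict complementarity there may be inactive indices where the dual multiplier is tight, and there the sign term gives no control on $y_k$. The device that rescues the argument is to fold these finitely many indices into the enlarged support $\tilde I$ and rely on FBI-injectivity of $K|_{\tilde I}$ there, while keeping a genuine coercive gap on the remaining cofinite index set. This is the same mechanism behind the linear-convergence analysis of~\cite{bredies2008itersoftconvlinear}. Once the error bound is established, global $Q$-linear convergence is immediate from Theorem~\ref{thm:q-conv_ppp_general}.
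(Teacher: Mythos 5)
Your proof is correct, but it takes a genuinely different route to the local error bound $\norm{y-\bar u}\leq L\norm{v}$, which is indeed the only non-routine hypothesis of Theorem~\ref{thm:q-conv_ppp_general}. The paper proceeds in three stages: a $\Gamma$-convergence argument showing qualitatively that $v\to 0$ forces $y\to\bar u$; a support-identification step (cited from \cite{griesse2008ssnsparsity}) showing that near $\bar u$ all relevant supports lie in a common finite set $I$; and finally a contraction estimate $\norm{\bar u-y}\leq\norm{(\id-K^*K)(\bar u-y)}+\norm{v}$ based on the soft-thresholding fixed-point equations, non-expansiveness of $S_\alpha$, and the normalization $\norm{K}\leq 1$, with FBI supplying $\norm{(\id-K^*K)|_I}\leq c<1$. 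You instead work purely variationally: square-summability of $K^*(K\bar u-g)$ gives finiteness of the extended active set $\tilde I$ and a strict-complementarity gap $\gamma$ on its complement; testing the difference of the two inclusions against $h=y-\bar u$ and exploiting componentwise monotonicity of $\Sign$ controls $\norm{Kh}^2$ and the off-block $\ell^1$ mass simultaneously; the FBI lower bound on the finite block plus an absorption argument for $\norm{v}\leq\delta$ closes the estimate. Your version buys several things: it is self-contained (no appeal to an external support-identification proposition, no $\Gamma$-convergence detour), it does not require $\norm{K}\leq 1$, it gives explicit constants $L,\delta$ in terms of $\kappa$, $\gamma$, $\alpha$, $\norm{K}$, and it delivers uniqueness of the minimizer as a by-product --- a point the paper's statement of Theorem~\ref{thm:q-conv_ppp_general} tacitly needs (the bound cannot hold at $v=0$ for a non-singleton solution set) but does not address. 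The paper's version, in exchange, is shorter on the page and reuses the soft-thresholding machinery already set up in Section~\ref{sec:damp-iter-soft}. Both proofs locate the role of FBI in the same place: injectivity, hence a lower bound, for $K$ restricted to a finite index set containing the active coordinates.
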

\begin{proof}
  We show that the conditions in Theorem~\ref{thm:q-conv_ppp_general}
  are fulfilled. Hence, we consider $T(u) = K^*(Ku-g) +
  \alpha\Sign(u)$ and
  $0\in T(\bar u)$ and $v\in T(y)$.
  
  First we show, that $v\to 0$ implies $y\to\bar u$.
  To this end, we remark that $y$ solves
  \[
  y\in\argmin_u \Psi(u) - \scp{u}{v}.
  \]
  We now show, that the functional $\Psi(u) - \scp{u}{v}$
  $\Gamma$-converges to $\Psi$.  Consider a sequence $\seq{v_j}$ in
  $\ell^2$ with $v_j\to 0$ and define
  \[
  \Psi_j(u) = \Psi(u) - \scp{v_j}{u}.
  \]
  Since $\Psi$ is lower semi-continuous it holds for every $u_j\to u$
  that
  \[
  \liminf_j \Psi_j(u_j) = \liminf_j \Psi(u_j) - \scp{u_j}{v_j}
  \geq \Psi(u).
  \]
  Moreover, for the constant sequence $u_j=u$ we see that
  \[
  \limsup_j \Psi_j(u_j) = \limsup_j \Psi(u) - \scp{u}{v_j}
  = \Psi(u)
  \]
  and hence, $\Psi_j$ $\Gamma$-converges to $\Psi$ (see,
  e.g.~\cite{braides2002gammaconvergence}) and in particular the
  minimizers of $\Psi_j$ converge to that of $\Psi$, i.e.~for $y_j$
  such that $v_j\in T(y_j)$ it holds that $y_j\to\bar u$.  In other
  words: For $\epsilon>0$ there exists $\delta>0$ such that
  $\norm{v}\leq\delta$ implies $\norm{y-\bar u}\leq \epsilon$.
  
  For some $s>0$ the quantities $\bar u,v, y$ are characterized by
  \begin{align*}
    \bar u = S_\alpha(\bar u - K^*(K\bar u -g))\quad\text{and}\quad y
    = S_\alpha(y - K^*(Ky - g) + v).
  \end{align*}
  Similar to \cite[Proposition 3.10]{griesse2008ssnsparsity} one sees,
  that for $\norm{y-\bar u}\leq\epsilon$ and $\norm{v}\leq\delta$
  small enough there exists a number $k_0$ such that
  \[
  \{k\in\NN\ |\ |y - s K^*(Ky - g) + v|_k\leq s\alpha\} \subset
  \{1,\dots,k_0\} =: I.
  \]
  This shows, that the supports $y$ are uniformly bounded in a
  neighborhood of $\bar u$.
  Moreover, for $0<s<2/\norm{K}^2$ it holds
  \begin{align*}
  \norm{\bar u - y} &
  = \norm{S_\alpha(\bar u - K^*(K\bar u -g)) - S_\alpha(y - K^*(Ky - g) + v)}\\
  & \leq \norm{(\id-K^*K)(\bar u - y) + v}\\
  & \leq \norm{(\id-K^*K)(\bar u - y)} + \norm{v}
  \end{align*}
  Since $K$ obeys the FBI property and $\supp y,\supp u\subset I$,
  there exists $c\in]0,1[$ such that $\norm{(\id-K^*K)(\bar u -y)}\leq
  c\norm{\bar u - y}$ and we finally conclude
  \[
  \norm{\bar u -y}\leq \frac{1}{1-c}\norm{v}.
  \]
  The $Q$-linear convergence now follows from
  Theorem~\ref{thm:q-conv_ppp_general}.
\end{proof}


\section{Numerical experiments}
\label{sec:numer-exper}

In this section we present example calculations to illustrate how the
algorithm works in different settings.

\subsection{Digital holography}
\label{sec:digital-holography}
As a first example problem we consider the problem of digital
holography.  In digital holography, the data correspond to the
diffraction patterns of the
objects~\cite{goodman2005ifo,kreis2005hhi}.  Under Fresnel's
approximation, diffraction can be modeled by a convolution with a
``chirp'' kernel.  In the context of holograms of
particles~\cite{vikram1992pfh,hinsch1995tdp,hinsch2002hpi}, the
objects can be considered opaque (i.e., binary) and the hologram
recorded on the camera corresponds to the convolution of disks with
Fresnel's chirp kernels. The measurement of particle size and location
therefore amounts to an inverse
problem~\cite{soulez2007holography,soulez2007holography2,Denis2009}.

We consider the problem of locating small objects (points), i.e. we
assume that opaque objects are distributed in three-dimensional space
for which we use the coordinates $(x,y,z)$. For objects which are located in the plane $z_j$ and an icident laser beam
in $-z$-direction of wavelength $\lambda$, the amplitude in the
observation plane at $z=0$ is well modeled by a bidimensional
convolution with respect to the variables $(x,y)$ with the Fresnel function
\[
h_{z_j}(x,y) = \frac{1}{i \lambda z_j} \exp \Big(i \frac{\pi}{\lambda
  z_j} (x^2+y^2) \Big).
\]
However, one is only able to measure the absolute value and not the
complex valued amplitude. After simplification, the problem of
reconstruction of a diffraction pattern from objects in the $z_j$-plane
can be modelled as deconvolution with the following kernel, see
Figure~\ref{fig:conv_kernel_dh}
\[
\kappa_{z_j}(x,y) = \real{h_{z_j}(x,y)} =
\frac{\sin(\tfrac{\pi}{\lambda z_j}(x^2+y^2))}{\lambda z_j}.
\]
\begin{figure}[]
  \centering
  \includegraphics[width=3cm]{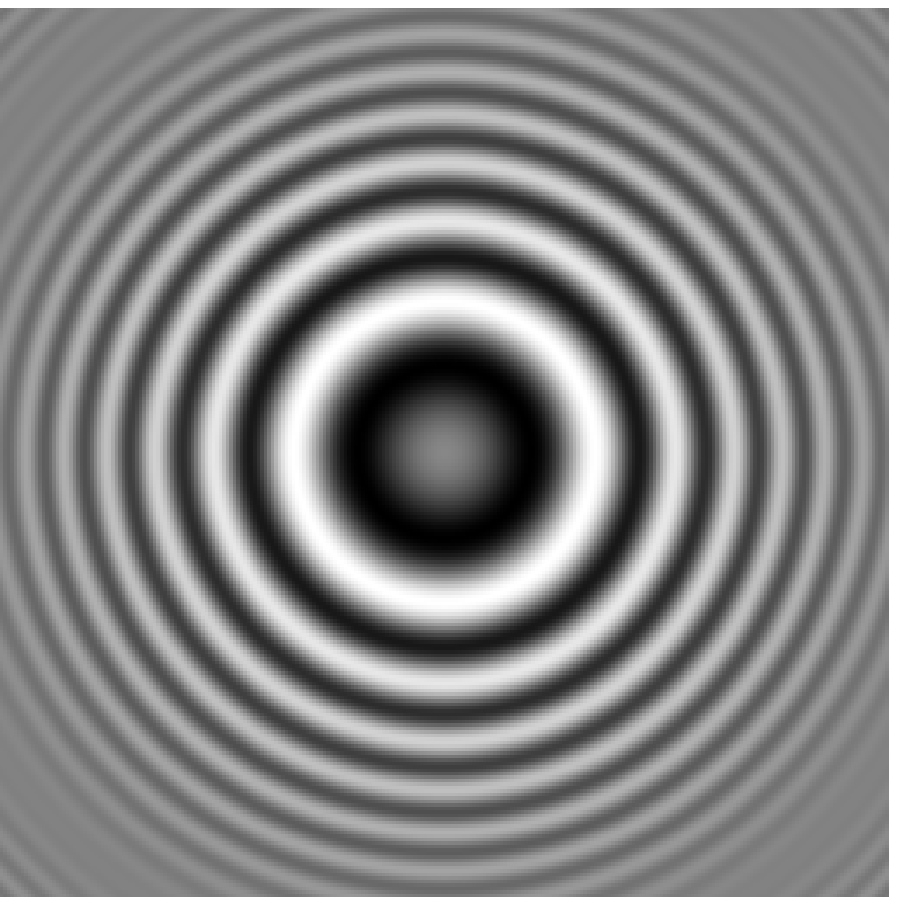}
  \includegraphics[width=3cm]{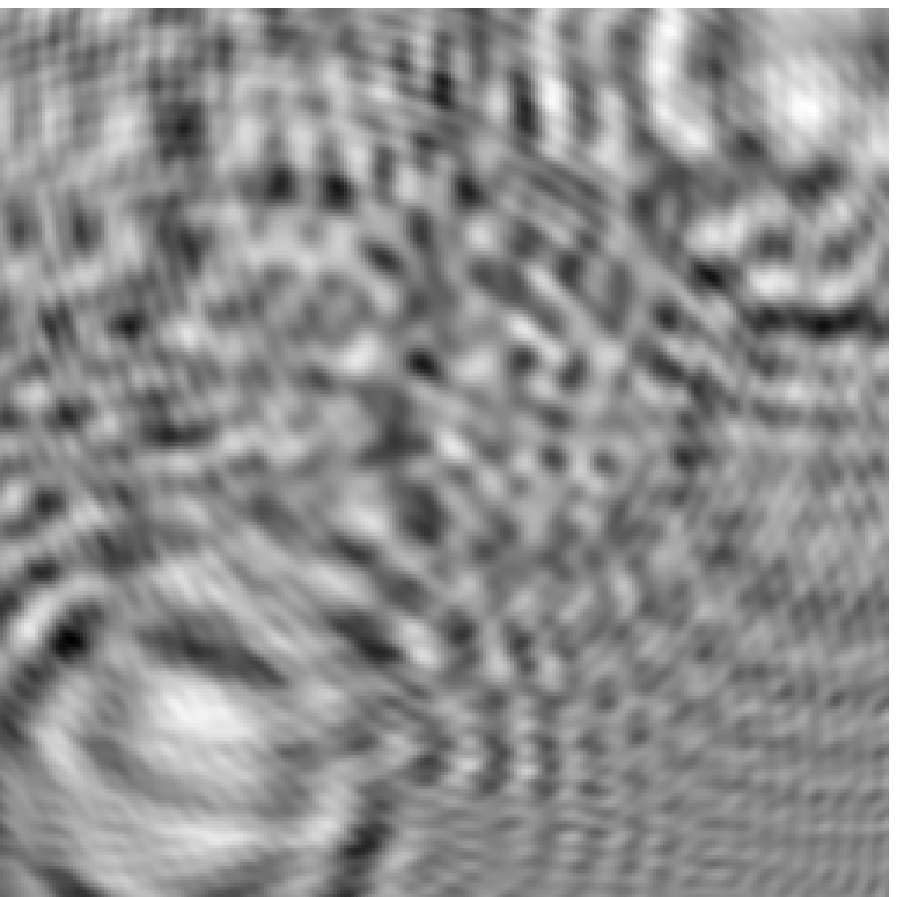}
  \caption{Left: Convolution kernel $\kappa$ for digital holography $\lambda=630$nm and recording distance of 250mm. Right: Simulated hologram with 20 particles.}
  \label{fig:conv_kernel_dh}
\end{figure}

We generated a hologram $g$ with a number of particles all located in
the same plane.\footnote{The author would like to thank Lo\"ic Denis
  (\'{E}cole Sup\'{e}rieure de Chimie Physique \'{E}lectronique de
  Lyon) for providing the implementation of the hologram simulator.}
Then we applied the projection proximal-point algorithm to the solution of
the inverse problem with the operator
\[
Ku = \kappa_{z_j}\ast u.
\]
We show reconstruction of the objects with different methods in
Figure~\ref{fig:rec_dh}. Note that all methods delivered well
seperated and moderately sharp objects. However, the result for the
projection proximal-point algorithm with generalized conditional
gradient method gives a slightly sharper reconstruction.
\begin{figure}
  \centering
  \small
  \begin{tabular}{cc}
  \includegraphics[width=3cm]{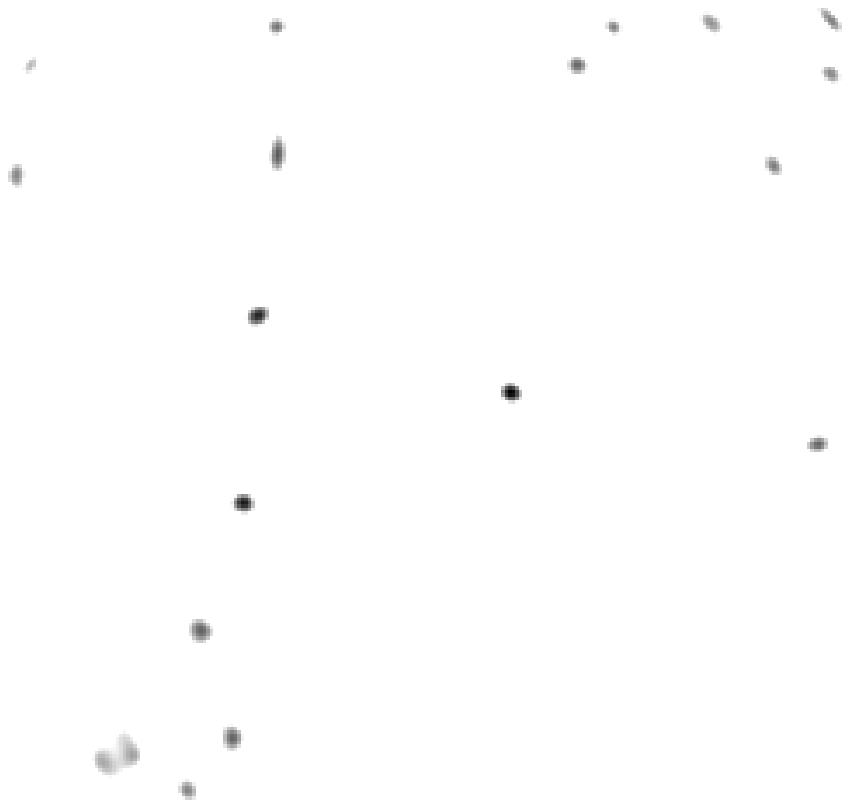}&
  \includegraphics[width=3cm]{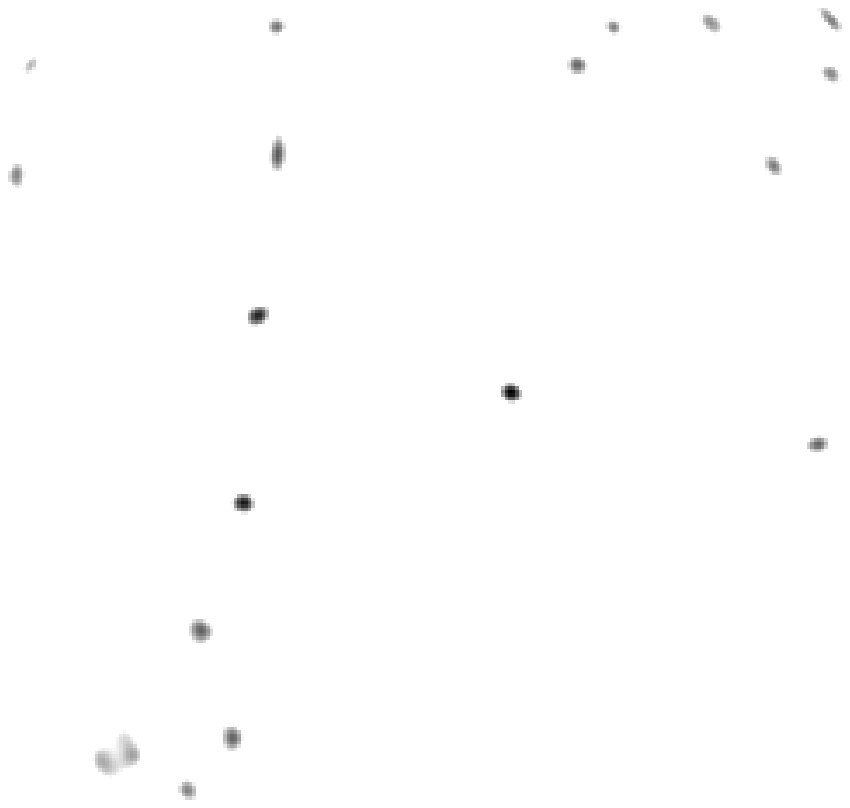}\\
  ppp, soft thresholding & soft thresholding\\
  \includegraphics[width=3cm]{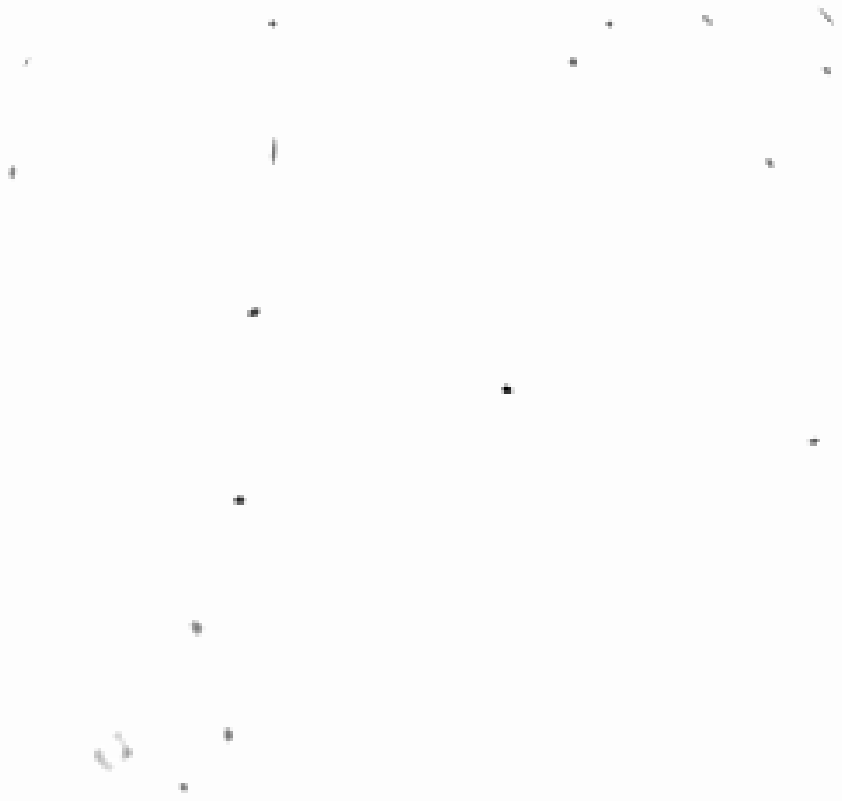}&
  \includegraphics[width=3cm]{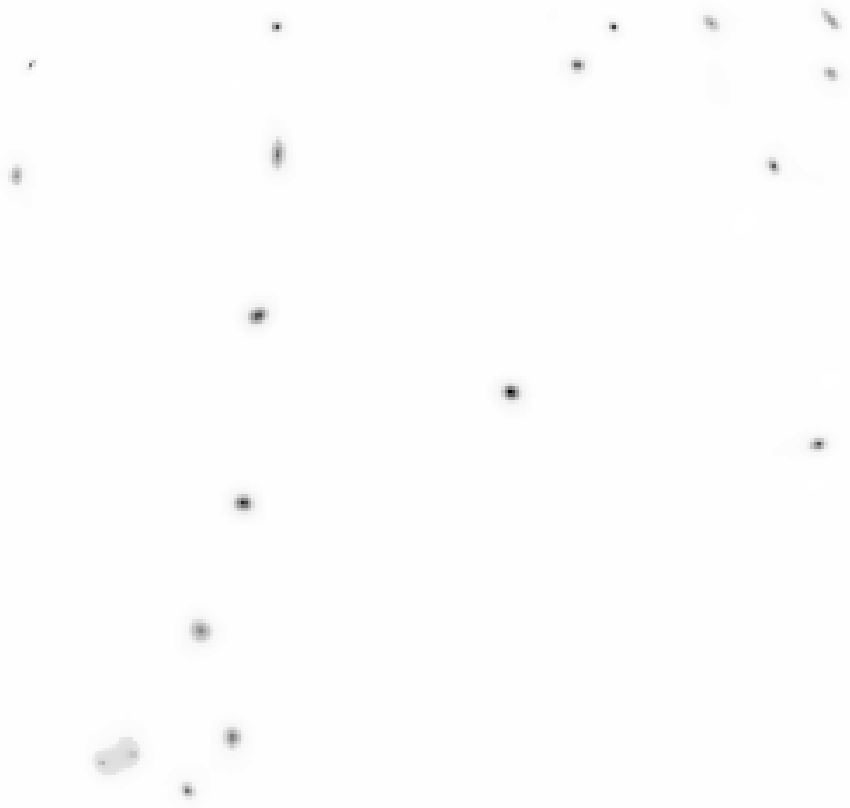}\\
  ppp, gen.~cond.~grad & hard thresholding
  \end{tabular}
  \caption{Reconstructions of the hologram from
    Figure~\ref{fig:conv_kernel_dh} with different methods and 120
    iterations. Top left: Projection proximal-point method with damped
    soft thresholding in the inner loop. Top right: Iterated soft
    thresholding~\cite{daubechies2003iteratethresh}. Bottom left:
    Projection proximal-point method with generalized conditional
    gradient method (Algorithm~\ref{alg:gencondgrad}) in the inner
    loop. Bottom right: Iterated hard
    thresholding~\cite{bredies2008harditer}.}
  \label{fig:rec_dh}
\end{figure}

In Figure~\ref{fig:func_val_dh} the development of the functional
value is shown. We remark, that the projection proximal-point
algorithm does not necessarily reduce the functional value and
sometimes, depending on the problem and the parameters, does not
produce monotonically decaying objective values. However, this may be
an advantage of the algorithms since it does not have to follow a
decent direction.  Note that iterated hard-thresholding (which is also
a generalized conditional gradient method) performs worst and that the
iterated soft-thresholding alone or in combination with the
proximal-point modification behave better and somewhat similar.
However, combining the generalized conditional gradient method with
the proximal-point modification gives significant improvement.

\begin{figure}
  \begin{tikzpicture}[xscale=.05,yscale=60]
    \footnotesize
    \draw[->] (0,0.42) -- (120,0.42) node[below] {$n$};
    \foreach \x in {100}
    \draw (\x,0.421) -- (\x,0.419) node[below] {\footnotesize$\x$};
    \draw[->] (0,0.42) -- (0,0.505) node[left] {$\Psi(u^n)$};
    \draw (2,.45) -- (-2,.45) node[left] {\footnotesize1.345};
    \draw (2,.5) -- (-2,.5) node[left] {\footnotesize1.350};
    \begin{scope}
      \clip (-20,.4) rectangle (180,.505);
      \draw[blue!90!white] plot file
      {data/dh/hard/Psi_mu_005_sigma_09.dat} node[below=4pt,right] {ppp,
        $\mu=0.05$, $\sigma=0.9$, gen.~cond.~grad.};
      
      \draw[green!80!white] plot file
      {data/dh/soft/Psi_mu_005_sigma_09.dat} node[above=4pt,right] {ppp, $\mu=0.05$,
        $\sigma=0.9$, soft thresholding};
      
      \draw[red!70!white] plot file {data/dh/hard/Psi_iter_thresh.dat}
      node[above=3pt,right] {hard thresholding};
      
      \draw[black!60!white] plot file {data/dh/soft/Psi_iter_thresh.dat}
      node[below=4pt,right] {soft thresholding};
    \end{scope}
  \end{tikzpicture}
  \caption{Decay of the functional value for the problem of digital
    holoraphy example and different methods. The parameters are
    similar toFigure~\ref{fig:rec_dh}.}
  \label{fig:func_val_dh}
\end{figure}
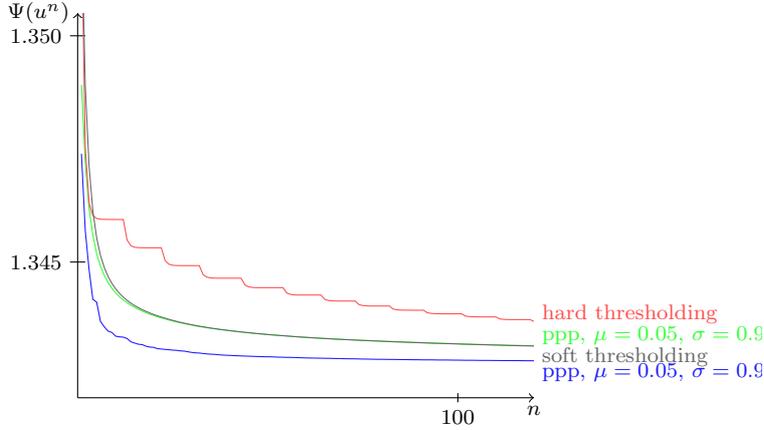

\subsection{Deblurring and the influence of $\mu$ and $\sigma$ }
\label{sec:debl-infl-mu}

The parameters $\mu$ and $\sigma$ influence the behavior of the
algorithm. Basicallly, the parameter $\mu$ influences how much the
subproblems are regularized and hence, for larger values of $\mu$ the
subproblems are solved faster, see, e.g.~the rate of convergence in
Theorem~\ref{thm:conv_damp_iter_soft}. The parameter $\sigma$ tunes
the desired accuracy for the subproblems, i.e.~the smaller $\sigma$
is, the more accurate is the solution of the subproblem and hence, the
subproblems are solved slower.  As a rule of thumb one may remember
that it does not seem necessary to choose $\sigma$ too small, i.e.~it
is enough to solve the subproblems only roughly. Typically
$\sigma=0.9$ is a good choice. On the other hand, not too much
regularization is necessary to terminate the inner loop quickly,
i.e.~small values of $\mu$ give good results. A typical value for may
be $\mu=0.05$. Moreover, smaller $\mu$ often lead to faster decay of
the functional value in the experiments.

To illustrate this behaovior we performed a one dimensional
deconvolution experiment.  We considered a discretized linear blurring
operator $A$ which consisted of the circular convolution with the
kernel $\kappa(x) = 1/(1+x^2/5^2)$ and combinded this with a synthesis
operator $B$ associated with simple hat-functions and hence,
considered the operator $K = AB$. We generated data which just
consists of a few spikes and hence, has a sparse representation in hat
functions.  We ran the projection proximal-point algorithm
(Algorithm~\ref{alg:ppp_l1}) with different values for $\mu$ and
$\sigma$ and with both soft-thresholding
(Algorithm~\ref{alg:dampitersoft}) and the generalized conditional
gradient method (Algorithm~\ref{alg:gencondgrad}) in the inner loop.
The decay of the objective value as shown in
Figure~\ref{fig:smooth_soft_mu_sigma}
and~\ref{fig:smooth_hard_mu_sigma} is typical.

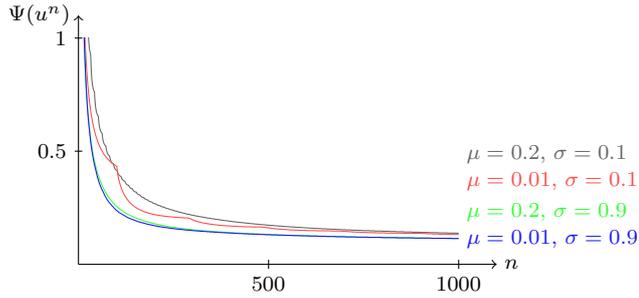
\begin{figure}
  \centering
  \begin{tikzpicture}[xscale=.05,yscale=.3]
    \footnotesize
    \draw[->] (0,0) -- (110,0) node[right] {$n$};
    \foreach \x in {50,100}
    \draw (\x,0.2) -- (\x,-0.2) node[below] {\footnotesize$\x0$};
    \draw[->] (0,0) -- (0,11) node[left] {$\Psi(u^n)$};
    \foreach \y/\ytrue in {5/0.5,10/1}
    \draw (1,\y) -- (-1,\y) node[left] {\footnotesize$\ytrue$};  
    \begin{scope}
      \clip (-1,-1) rectangle (180,10);
      \draw[black!60!white] plot file
      {data/smoothing/soft/Psi_mu_020_sigma_01.dat} node[above=30pt,right]
      {$\mu=0.2$, $\sigma=0.1$};
      \draw[red!70!white] plot file
      {data/smoothing/soft/Psi_mu_001_sigma_01.dat} node[above=20pt,right]
      {$\mu=0.01$, $\sigma=0.1$};
      \draw[green!80!white] plot file
      {data/smoothing/soft/Psi_mu_020_sigma_09.dat} node[above=10pt,right]
      {$\mu=0.2$, $\sigma=0.9$};
      \draw[blue!90!white] plot file
      {data/smoothing/soft/Psi_mu_001_sigma_09.dat} node[above=0pt,right]
      {$\mu=0.01$, $\sigma=0.9$};
    \end{scope}
  \end{tikzpicture}  
  \caption{Decay of the functional value for the projection
    proximal-point algorithtm with soft-thresholding in the inner loop
    for the deconvolution problem and different values of
    $\sigma$ and $\mu$.}
  \label{fig:smooth_soft_mu_sigma}
\end{figure}

\begin{figure}
  \centering
  \begin{tikzpicture}[xscale=.05,yscale=.5]
    \footnotesize
    \draw[->] (0,0) -- (110,0) node[right] {$n$};
    \foreach \x in {50,100}
    \draw (\x,0.2) -- (\x,-0.2) node[below] {\footnotesize$\x0$};
    \draw[->] (0,0) -- (0,6) node[left] {$\Psi(u^n)$};
    \foreach \y/\ytrue in {2.5/0.25,5/0.5}
    \draw (1,\y) -- (-1,\y) node[left] {\footnotesize$\ytrue$};  
    \begin{scope}
      \clip (-1,-1) rectangle (180,5.5);
      \draw[black!60!white] plot file
      {data/smoothing/hard/Psi_mu_020_sigma_01.dat} node[above=30pt,right]
      {$\mu=0.2$, $\sigma=0.1$};
      \draw[red!70!white] plot file
      {data/smoothing/hard/Psi_mu_001_sigma_01.dat} node[above=20pt,right]
      {$\mu=0.01$, $\sigma=0.1$};
      \draw[green!80!white] plot file
      {data/smoothing/hard/Psi_mu_020_sigma_09.dat} node[above=10pt,right]
      {$\mu=0.2$, $\sigma=0.9$};
      \draw[blue!90!white] plot file
      {data/smoothing/hard/Psi_mu_001_sigma_09.dat} node[above=0pt,right]
      {$\mu=0.01$, $\sigma=0.9$};
    \end{scope}
  \end{tikzpicture}
  \caption{Decay of the functional value for the projection
    proximal-point algorithm with the generalized conditional
    gradient method in the inner loop for the deconvolution
    problem and different values of $\sigma$ and $\mu$.}
  \label{fig:smooth_hard_mu_sigma}
\end{figure}
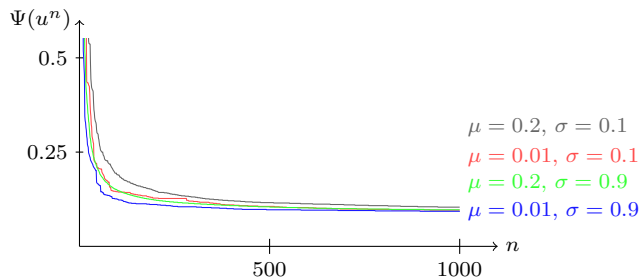

To get an impression, how the values of the parameter $\mu$ and
$\sigma$ influence the termination of the inner loop due to
conditons~(\ref{eq:cond_v_in_T_y})--(\ref{eq:cond_epsilon_less}), we
recorded the numbers of inner and outer iterations for different
values of $\mu$ and $\sigma$ in Table~\ref{tab:smooth_soft_mu_sigma}
for soft thresholding and in Table~\ref{tab:smooth_hard_mu_sigma} for
the generalized conditional gradient method. In all cases we ran the
algorithm until 350 total iterations have been done. (Note that this
sometimes causes that the last inner iteration did not terminate, see
e.g.~Table~\ref{tab:smooth_soft_mu_sigma} bottom left). First one
notes that smaller values of $\sigma$ cause the inner iteration to
take longer to terminate and the same holds true for smaller values of
$\mu$. Moreover, we see that smaller values of $\sigma$ (i.e.~solving
the subproblems more precise) does not improve the decay of the
functional value. Hence, we may conclude, that a larger value of
$\sigma$ is preferable in general. The value of $\mu$ behaves
differently for soft-thresholding and the generalized conditional
gradient method. While for the first, smaller values of $\mu$ lead to
better results concerning the functional value, for the latter the
converse holds true. Finally, the generalized conditional gradient
method gives in general smaller functional values.

\begin{table}
  \centering
  \small
  \begin{tabular}{cc}
    $\mu=0.2$, $\sigma=0.1$
    &
    $\mu=0.2$, $\sigma=0.9$\\
    \begin{tabular}{@{}ccc@{}}
 \toprule
 \# out iter & \# in inter & $\Psi(u^n)$ \\
\midrule
 \verb|  1| & \verb|  6| & \verb|5.03e-02|  \\
 \verb|  2| & \verb|  8| & \verb|1.91e-02|  \\
 \verb|  3| & \verb|  9| & \verb|1.21e-02|  \\
 \vdots      & \vdots     & \vdots          \\
 \verb| 31| & \verb| 11| & \verb|2.13e-03|  \\
 \verb| 32| & \verb| 11| & \verb|2.10e-03|  \\
 \verb| 33| & \verb| 11| & \verb|2.06e-03|  \\
 \verb| 34| & \verb| 11| & \verb|2.03e-03|  \\
\bottomrule
\end{tabular}

    &
    \begin{tabular}{@{}ccc@{}}
 \toprule
 \# out iter & \# in inter & $\Psi(u^n)$ \\
\midrule
 \verb|  1| & \verb|  1| & \verb|8.88e-02|  \\
 \verb|  2| & \verb|  1| & \verb|5.20e-02|  \\
 \verb|  3| & \verb|  1| & \verb|3.68e-02|  \\
 \vdots      & \vdots     & \vdots           \\
 \verb| 347| & \verb|  1| & \verb|1.46e-03|  \\
 \verb| 348| & \verb|  1| & \verb|1.46e-03|  \\
 \verb| 349| & \verb|  1| & \verb|1.46e-03|  \\
 \verb| 350| & \verb|  1| & \verb|1.46e-03|  \\
\bottomrule
\end{tabular}
\\
    & \\
    $\mu=0.01$, $\sigma=0.1$
    &
    $\mu=0.01$, $\sigma=0.9$\\
    \begin{tabular}{@{}ccc@{}}
 \toprule
 \# out iter & \# in inter & $\Psi(u^n)$ \\
\midrule
 \verb|  1| & \verb| 101| & \verb|5.06e-03|  \\
 \verb|  2| & \verb| 190| & \verb|2.06e-03|  \\
\bottomrule
\end{tabular}

    &
    \begin{tabular}{@{}ccc@{}}
 \toprule
 \# out iter & \# in inter & $\Psi(u^n)$ \\
\midrule
 \verb|  1| & \verb|  6| & \verb|2.08e-01|  \\
 \verb|  2| & \verb| 11| & \verb|1.49e-01|  \\
 \verb|  3| & \verb| 12| & \verb|9.76e-02|  \\
 \vdots      & \vdots     & \vdots          \\
 \verb| 16| & \verb| 14| & \verb|1.70e-03|  \\
 \verb| 17| & \verb| 14| & \verb|1.63e-03|  \\
 \verb| 18| & \verb| 13| & \verb|1.58e-03|  \\
 \verb| 19| & \verb| 13| & \verb|1.54e-03|  \\
\bottomrule
\end{tabular}
\\
  \end{tabular}
  \caption{Output of the projection proximal-point algorithm with soft thresholding in the inner loop. The problem under consideration in the deblurring problem.}
  \label{tab:smooth_soft_mu_sigma}
\end{table}

\begin{table}
  \centering
  \small
  \begin{tabular}{cc}
    $\mu=0.2$, $\sigma=0.1$
    &
    $\mu=0.2$, $\sigma=0.9$\\
    \begin{tabular}{@{}ccc@{}}
 \toprule
 \# out iter & \# in inter & $\Psi(u^n)$ \\
\midrule
 \verb|  1| & \verb|  5| & \verb|4.94e-02|  \\
 \verb|  2| & \verb|  5| & \verb|1.92e-02|  \\
 \verb|  3| & \verb|  7| & \verb|1.18e-02|  \\
 \vdots      & \vdots     & \vdots           \\
 \verb| 117| & \verb|  2| & \verb|1.27e-03|  \\
 \verb| 118| & \verb|  6| & \verb|1.27e-03|  \\
 \verb| 119| & \verb|  1| & \verb|1.26e-03|  \\
 \verb| 120| & \verb|  1| & \verb|1.26e-03|  \\
\bottomrule
\end{tabular}

    &
    \begin{tabular}{@{}ccc@{}}
 \toprule
 \# out iter & \# in inter & $\Psi(u^n)$ \\
\midrule
 \verb|  1| & \verb|  1| & \verb|1.54e-01|  \\
 \verb|  2| & \verb|  1| & \verb|7.78e-02|  \\
 \verb|  3| & \verb|  1| & \verb|4.36e-02|  \\
 \vdots      & \vdots     & \vdots           \\
 \verb| 347| & \verb|  1| & \verb|1.10e-03|  \\
 \verb| 348| & \verb|  1| & \verb|1.10e-03|  \\
 \verb| 349| & \verb|  1| & \verb|1.10e-03|  \\
 \verb| 350| & \verb|  1| & \verb|1.10e-03|  \\
\bottomrule
\end{tabular}
\\
    & \\
    $\mu=0.01$, $\sigma=0.1$
    &
    $\mu=0.01$, $\sigma=0.9$\\
    \begin{tabular}{@{}ccc@{}}
 \toprule
 \# out iter & \# in inter & $\Psi(u^n)$ \\
\midrule
 \verb|  1| & \verb| 27| & \verb|4.59e-03|  \\
 \verb|  2| & \verb| 32| & \verb|1.97e-03|  \\
 \verb|  3| & \verb| 85| & \verb|1.60e-03|  \\
 \verb|  4| & \verb| 83| & \verb|1.44e-03|  \\
 \verb|  5| & \verb|  1| & \verb|1.33e-03|  \\
 \verb|  6| & \verb| 12| & \verb|1.27e-03|  \\
 \verb|  7| & \verb| 97| & \verb|1.22e-03|  \\
 \verb|  8| & \verb|  1| & \verb|1.19e-03|  \\
\bottomrule
\end{tabular}

    &
    \begin{tabular}{@{}ccc@{}}
 \toprule
 \# out iter & \# in inter & $\Psi(u^n)$ \\
\midrule
 \verb|  1| & \verb|  5| & \verb|1.84e-01|  \\
 \verb|  2| & \verb|  4| & \verb|5.94e-02|  \\
 \verb|  3| & \verb|  4| & \verb|2.51e-02|  \\
 \vdots      & \vdots     & \vdots           \\
 \verb| 56| & \verb|  3| & \verb|1.03e-03|  \\
 \verb| 57| & \verb|  6| & \verb|1.05e-03|  \\
 \verb| 58| & \verb| 44| & \verb|1.02e-03|  \\
 \verb| 59| & \verb|  1| & \verb|1.01e-03|  \\
\bottomrule
\end{tabular}
\\
  \end{tabular}
  \caption{Output of the projection proximal-point algorithm with the generalized conditional gradient method in the inner loop. The problem under consideration in the deblurring problem.}
  \label{tab:smooth_hard_mu_sigma}
\end{table}

\section{Conclusion}
\label{sec:conclusion}

The projection proximal point algorithm as proposed
in~\cite{solodov1999hybridproximalpoint} is applicable for the problem
of $\ell^1$-minimization. It could be shown that under the FBI
assumption the algorithm even converges linearly. Both the iterated
soft-thresholding and the generalized conditional gradient method are
applicable methods to solve the regularized subproblems. For the
iterated soft-thresholding the projection proximal-point algorithm
does not lead to a significant improvement. On the theoretical side,
the iterated soft-thresholding itself converges
linearly~\cite{bredies2008itersoftconvlinear,hale2008fixedpointcontinuation}
and on the practical side, both methods behave comparable (see
Figure~\ref{fig:func_val_dh}). However, for the generalized
conditional gradient method the situation is different. For the
algorithm itself as stated in~\cite{bredies2008harditer} the distance
to the minimizer only converges like $\bigO(n^{-1/2})$ but the
projection proximal-point extension gives linear convergence.
Moreover, also the practical behavior is better, see
Figure~\ref{fig:func_val_dh}. In total, it seems that the generalized
conditional gradient method combines well with the projection proximal
point idea.

However, the aim of this paper was not to develop the fastest
available method for $\ell^1$-minimization but to provide an algorithm
from a different class of methods. Hence, it may be expected that
after considerable fine tuning (parameter choice rules, more efficient
solvers for the subproblems, backtracking,\dots) the projection
proximal-point algorithm will become comparable to state-of-the-art
methods.

Finally we remark, that Algorithm~\ref{alg:ppp_l1} does not rest upon
linearity of the operator and may also be applied to non-linear
operators. Also for the solution of the subproblems either iterated
soft-thresholding~\cite{bonesky2007gencondgradnonlin} or the
generalized condition gradient method may be used.

\bibliography{test}

\begin{thebibliography}{10}

\bibitem{bonesky2007gencondgradnonlin}
Thomas Bonesky, Kristian Bredies, Dirk~A. Lorenz, and Peter Maass.
\newblock A generalized conditional gradient method for nonlinear operator
  equations with sparsity constraints.
\newblock {\em Inverse Problems}, 23:2041--2058, 2007.

\bibitem{braides2002gammaconvergence}
Andrea Braides.
\newblock {\em $\Gamma$-convergence for Beginners}.
\newblock Oxford Lecture Series in Mathematics and Its Applications. Oxford
  University Press, 2002.

\bibitem{bredies2008harditer}
Kristian Bredies and Dirk~A. Lorenz.
\newblock Iterated hard shrinkage for minimization problems with sparsity
  constraints.
\newblock {\em SIAM Journal on Scientific Computing}, 30(2):657--683, 2008.

\bibitem{bredies2008itersoftconvlinear}
Kristian Bredies and Dirk~A. Lorenz.
\newblock Linear convergence of iterative soft-thresholding.
\newblock {\em Journal of Fourier Analysis and Applications},
  14(5--6):813--837, 2008.

\bibitem{bredies2005gencondgrad}
Kristian Bredies, Dirk~A. Lorenz, and Peter Maass.
\newblock A generalized conditional gradient method and its connection to an
  iterative shrinkage method.
\newblock {\em Computational Optimization and Applications}, 42(2):173--193,
  2008.

\bibitem{combettes2005signalrecovery}
Patrick~L. Combettes and Val{\'e}rie~R. Wajs.
\newblock Signal recovery by proximal forward-backward splitting.
\newblock {\em Multiscale Model. Simul.}, 4(4):1168--1200, 2005.

\bibitem{daubechies2003iteratethresh}
Ingrid Daubechies, Michel Defrise, and Christine {De Mol}.
\newblock An iterative thresholding algorithm for linear inverse problems with
  a sparsity constraint.
\newblock {\em Communications in Pure and Applied Mathematics},
  57(11):1413--1457, 2004.

\bibitem{Denis2009}
Lo{\"i}c Denis, Dirk~A. Lorenz, and Dennis Trede.
\newblock Greedy solution of ill-posed problems: {E}rror bounds and exact
  inversion.
\newblock Preprint 8, DFG SPP 1324, arXiv:0904.0154, 2009.

\bibitem{donoho2008fastl1}
David~L. Donoho and Yaakov Tsaig.
\newblock Fast solution of $\ell^1$-norm minimization problems when the
  solution may be sparse.
\newblock {\em IEEE Transactions on Information Theory}, 54(11):4789--4812,
  2008.

\bibitem{efron2004lars}
Bradley Efron, Trevor Hastie, Iain~M. Johnstone, and Robert Tibshirani.
\newblock Least angle regression.
\newblock {\em The Annals of Statistics}, 32(2):407--451, 2004.

\bibitem{figueiredo2007gradproj}
M{\'a}rio A.~T. Figueiredo, Robert~D. Nowak, and Stephen~J. Wright.
\newblock Gradient projection for sparse reconstruction: Applications to
  compressed sensing and other inverse problems.
\newblock {\em IEEE Journal of Selected Topics in Signal Processing},
  4:586--597, 2007.

\bibitem{goodman2005ifo}
Joseph~W. Goodman.
\newblock {\em {Introduction to Fourier optics}}.
\newblock Roberts \& Co, 2005.

\bibitem{griesse2008ssnsparsity}
Roland Griesse and Dirk~A. Lorenz.
\newblock A semismooth {N}ewton method for {T}ikhonov functionals with sparsity
  constraints.
\newblock {\em Inverse Problems}, 24(3):035007 (19pp), 2008.

\bibitem{hale2008fixedpointcontinuation}
Elaine~T. Hale, Wotao Yin, and Yin Zhang.
\newblock Fixed-point continuation for $\ell^1$-minimization: Methodology and
  convergence.
\newblock {\em SIAM Journal on Optimization}, 19(3):1107--1130, 2008.

\bibitem{hinsch1995tdp}
Klaus~D. Hinsch.
\newblock {Three-dimensional particle velocimetry}.
\newblock {\em Measurement Science and Technology}, 6(6):742--753, 1995.

\bibitem{hinsch2002hpi}
Klaus~D. Hinsch and Sven~F. Hermann.
\newblock {Holographic particle image velocimetry}.
\newblock {\em Measurement Science and Technology}, 13(7):61--72, 2002.

\bibitem{kim2007l1ls}
Seung-Jean Kim, Kwangmoo Koh, Michael Lustig, Stephen Boyd, and Dimitry
  Gorinevsky.
\newblock A method for large-scale $\ell_1$-regularized least squares problems
  with applications in signal processing and statistics.
\newblock {\em IEEE Journal on Selected Topics in Signal Processing},
  1(4):606--617, 2007.

\bibitem{kreis2005hhi}
Thomas Kreis.
\newblock {\em {Handbook of holographic interferometry: optical and digital
  methods}}.
\newblock Wiley, 2005.

\bibitem{osborne2000variableselection}
Michael~R. Osborne, Brett Presnell, and Berwin~A. Turlach.
\newblock A new approach to variable selection in least squares problems.
\newblock {\em IMA Journal of Numerical Analysis}, 20:389--404, 2000.

\bibitem{solodov1999hybridproximalpoint}
Mikhail~V. Solodov and Benar~F. Svaiter.
\newblock A hybrid projection-proximal point algorithm.
\newblock {\em Journal of Convex Analysis}, 6(1):59--70, 1999.

\bibitem{soulez2007holography2}
Ferr{\'e}l Soulez, Lo{\"i}c Denis, {\'E}ric Thi{\'e}baut, Corinne Fournier, and
  Charles Goepfert.
\newblock {Inverse problem approach in particle digital holography:
  out-of-field particle detection made possible}.
\newblock {\em Journal of the Optical Society of America A}, 24(12):3708--3716,
  2007.

\bibitem{soulez2007holography}
Ferr{\'e}ol Soulez, Lo{\"i}c Denis, Corinne Fournier, {\'E}ric Thi{\'e}baut,
  and Charles Goepfert.
\newblock Inverse problem approach for particle digital holography: accurate
  location based on local optimisation.
\newblock {\em Journal of the Optical Society of America A}, 24(4):1164--1171,
  2007.

\bibitem{vikram1992pfh}
Chandra~S. Vikram.
\newblock {\em {Particle field holography}}.
\newblock Cambridge University Press, 1992.

\end{thebibliography}

\bibliographystyle{plain}

\end{document}